\DeclareMathOperator{\diagg}{diag}
\renewcommand{\le}{\leqslant}
\renewcommand{\ge}{\geqslant}
\newtheorem{theorem}{Theorem}
\newtheorem{lemma}[theorem]{Lemma}
\theoremstyle{remark}
\begin{document}

\title{Remarks on the classification
of a pair of commuting semilinear operators}

\author[deb]{Debora Duarte de Oliveira}
\ead{d.duarte.oliveira@gmail.com}

\author[horn]{Roger A. Horn}
\ead{rhorn@math.utah.edu}

\author[klim]{Tatiana Klimchuk}
\ead{klimchuk.tanya@gmail.com}

\author[serg]{Vladimir V. Sergeichuk\corref{cor}}
\ead{sergeich@imath.kiev.ua}

\address[deb]{Department of Mathematics, University
of S\~ao Paulo, Brazil.}

\address[horn]
{Department of Mathematics, University
of Utah, Salt Lake City, Utah, USA.}

\address[klim]
{Faculty of Mechanics and Mathematics,
Kiev National Taras Shevchenko
University, Kiev, Ukraine.}

\address[serg]
{Institute of Mathematics,
Tereshchenkivska 3, Kiev, Ukraine.}

\cortext[cor]{Corresponding author.}

\begin{abstract}
Gelfand and Ponomarev [Functional Anal.
Appl. 3 (1969) 325--326] proved that
the problem of classifying pairs of
commuting linear operators contains the
problem of classifying $k$-tuples of
linear operators for any $k$. We prove
an analogous statement for semilinear
operators.
\end{abstract}

\begin{keyword}
Semilinear operators\sep Classification
\MSC 15A04\sep 15A21
\end{keyword}

\maketitle

\section{Introduction}

Gelfand and Ponomarev \cite{gel-pon}
proved that the problem of classifying
pairs of commuting linear operators on
a vector space contains the problem of
classifying $k$-tuples of linear
operators for any $k$ (that is, the
solution of the former problem would
imply the solution of the latter
problem).

We prove an analogous statement for
semilinear operators. A mapping ${\cal
A}: U\to V$ between two complex vector
spaces is called \emph{semilinear} if
\begin{equation*}\label{lke}
{\cal A}(u+u')={\cal A}u+{\cal A}u',\qquad
{\cal A}(\alpha u)=\bar\alpha {\cal A}u
\end{equation*}
for all $u,u'\in U$ and $\alpha
\in\mathbb C$. We write ${\cal A}:
U\dashrightarrow V$ if $\cal A$ is
semilinear. If $U=V$ then $\cal A$ is
called a \emph{semilinear operator}. In
Section \ref{s1} we recall some basic
facts about semilinear mappings and
describe all semilinear operators that
commute with a given nilpotent
semilinear operator.

In Sections \ref{s2} and \ref{s3} we
prove the following theorem, which
extends the results of \cite{gel-pon}
to semilinear operators.

\begin{theorem}\label{gey}
{\rm(a)} The problem of classifying
pairs of commuting semilinear operators
contains the problem of classifying
pairs of arbitrary semilinear
operators.

{\rm(b)} The problem of classifying
pairs of semilinear operators contains
the problem of classifying
$(p+q)$-tuples consisting of $p$ linear
operators and $q$ semilinear operators,
in which $p$ and $q$ are arbitrary
nonnegative integers.
\end{theorem}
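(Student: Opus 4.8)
The plan is to prove both parts by exhibiting explicit reductions of Gelfand--Ponomarev type, as in \cite{gel-pon}: for (b) a rule that assigns to each $(p+q)$-tuple of operators a single pair of semilinear operators on a larger space, and for (a) a rule that assigns to each pair of semilinear operators a pair of \emph{commuting} semilinear operators, each rule having the property that two input objects are isomorphic if and only if the two output objects are isomorphic, with the transforming operator constructible in both directions. Throughout I would use, in place of the identity matrix that occurs in the linear construction of \cite{gel-pon}, the complex-conjugation operator $J\colon v\mapsto\bar v$ on a coordinate space, since $J$ is the natural semilinear analogue of the identity; note that $J^{2}=I$ and $JSJ^{-1}=\bar S$ for any matrix $S$, the bar denoting entrywise conjugation.

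For (b), given a $(p+q)$-tuple $(\mathcal A_{1},\dots,\mathcal A_{p},\mathcal B_{1},\dots,\mathcal B_{q})$ of operators on $V$ with the $\mathcal A_{i}$ linear and the $\mathcal B_{j}$ semilinear, I would put $W=V\oplus\dots\oplus V$ (about $2(p+q)$ summands) and take the first operator $\mathcal F$ to be the nilpotent semilinear operator that carries each summand isomorphically onto the next by $J$ and kills the last. By the description in Section \ref{s1}, the semilinear operators commuting with such an $\mathcal F$ are the block upper-triangular Toeplitz ones whose coefficient on the $m$-th superdiagonal is linear for $m$ even and semilinear for $m$ odd; this parity phenomenon is peculiar to the semilinear setting. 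For the second operator $\mathcal G$ I would take a matrix all of whose nonzero blocks lie in one block column, placing the $\mathcal A_{i}$'s and $\mathcal B_{j}$'s in rows chosen so that each $\mathcal A_{i}$ is recovered from $\mathcal G$ and $\mathcal F$ by composing with an \emph{even} power of $\mathcal F$ (hence yields a linear operator) and each $\mathcal B_{j}$ likewise by an even power but out of a semilinear block (hence yields a semilinear operator), with conjugations $J$ inserted in the remaining rows to rigidify the construction; the empty slots between members of the tuple are what make the parities come out right. Each $\mathcal A_{i}$, $\mathcal B_{j}$ is then $\pi\mathcal F^{\,s}\mathcal G\iota$ for a suitable inclusion $\iota$ and projection $\pi$ onto summands that are intrinsically determined by the pair, as images of powers of $\mathcal F$ and complements thereto. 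For (a) I would run the same construction with $p+q=2$ and, in addition, arrange that the two operators commute; this can be achieved by a variant in which the relevant products both vanish, exactly as in the passage from tuples to pairs in \cite{gel-pon}, after which the recovered objects are the original pair $(\mathcal A,\mathcal B)$.

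The step I expect to be the main obstacle is verifying, in each reduction, that an isomorphism of the output objects can be modified so as to respect the block structure and thus induce an isomorphism of the inputs. Concretely, a transforming operator $\mathcal S$ satisfying $\mathcal S\mathcal F=\mathcal F'\mathcal S$ must commute, under the identification of the two nilpotent shifts, with the nilpotent $\mathcal F$, so by Section \ref{s1} it is block Toeplitz subject to the superdiagonal parity constraints above; substituting this into the second relation $\mathcal S\mathcal G=\mathcal G'\mathcal S$ and doing the column-by-column bookkeeping should force the diagonal block of $\mathcal S$ to be invertible and to conjugate each $\mathcal A_{i}$ to $\mathcal A_{i}'$ and each $\mathcal B_{j}$ to $\mathcal B_{j}'$, once one checks that the superdiagonal corrections can be absorbed. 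Here semilinearity genuinely complicates the linear argument of \cite{gel-pon}: every time $\mathcal S$ is pushed past an odd power of $\mathcal F$ it is replaced by its conjugate $\bar{\mathcal S}$, so the parities of the superdiagonals must be tracked with care, and the conjugation blocks $J$ placed in $\mathcal G$ are included precisely in order to kill the extra freedom that this would otherwise leave.
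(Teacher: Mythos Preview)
Your overall strategy---encode the data as a pair $(\mathcal F,\mathcal G)$ with $\mathcal F$ a fixed nilpotent semilinear shift and $\mathcal G$ carrying the tuple, then exploit the parity-alternating description of $\{S:\bar SJ=JS\}$ from Section~\ref{s1}---is exactly the paper's approach. The differences are in the specific encodings.

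For (b) the paper's $\mathcal G$ is simpler than yours. With $\mathcal F$ given by a single nilpotent Jordan block, the paper places the linear operators $X_1,\dots,X_p$ along the first block superdiagonal and the semilinear operators $Y_1,\dots,Y_q$ as isolated diagonal blocks further down (inserting one extra zero block when $p$ is even so that the $Y_j$ sit in slots of the correct parity). No rigidifying identity blocks are needed: since the diagonal of any admissible $S$ is already $(C,\bar C,C,\bar C,\dots)$, equating the $(i,i{+}1)$ and $(k,k)$ blocks of $M_{X,Y}S=\bar SM_{X',Y'}$ immediately yields the required similarities and consimilarities via the single matrix $C$, and for the converse one takes $S=\diagg(C,\bar C,C,\dots)$. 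Your single-column variant with inserted conjugation blocks may well work too, but the step you flag---absorbing the superdiagonal corrections---is genuinely more delicate there, whereas in the paper's layout it does not arise at all.

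For (a) your sketch has a gap. You propose to achieve commutativity by making ``the relevant products both vanish''. With $\mathcal F$ a single Jordan block this cannot encode two independent matrices: $\mathcal F\mathcal G=\mathcal G\mathcal F=0$ confines the nonzero part of $\mathcal G$ to a single subblock. The paper does something different. It takes $J=J_4(0_n)\oplus J_1(0_n)$ and
\[
M=\left[\begin{array}{cccc|c}
0&0&X&0&Y\\ 0&0&0&\bar X&0\\ 0&0&0&0&0\\ 0&0&0&0&0\\ \hline 0&0&0&I&0
\end{array}\right],
\]
for which $\bar MJ$ and $JM$ are \emph{equal but nonzero} (both have the single nonzero block $\bar X$ in position $(1,4)$). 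Commutativity is obtained not by annihilating the products but by inserting the redundant copy $\bar X$ so that the two products agree. The identity block in position $(5,4)$ then plays the rigidifying role you had in mind, forcing the two diagonal subblocks $C$ and $F$ of the transforming matrix to coincide, whence $(X,Y)C=\bar C(X',Y')$. This device---duplicating $X$ as $\bar X$ one step down to make $\bar MJ=JM$---is precisely what your proposal is missing for part (a).
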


A similar statement for operators on
unitary spaces was proved in
\cite[Lemma 2]{g-h-s}: the problem of
classifying semilinear operators on a
unitary space contains the problem of
classifying tuples of linear and
semilinear operators on a unitary
space.

Any tuple in Theorem \ref{gey}(b)
consists of operators acting on the
same vector space. In Section \ref{s4}
we generalize Theorem \ref{gey}(b) to
collections of mappings that act on
different spaces. We use the notion of
biquiver representations introduced in
\cite[Section 5]{ser_surv}, which
generalizes the notion of quiver
representations introduced by Gabriel
\cite{gab}. A \emph{biquiver} is a
directed graph with full and dashed
arrows; for example,
\begin{equation}\label{ksy}
\begin{split}
\xymatrix@=2pc{
 &{1}&\\
 {2}\ar@(ul,dl)@{-->}_{\gamma }
 \ar@{-->}[ur]^{\alpha}
  \ar@/^/@{->}[rr]^{\delta }
 \ar@/_/@{-->}[rr]_{\varepsilon} &&{3}
 \ar[ul]_{\beta}
 \ar@(ur,dr)^{\zeta}
 }\end{split}
\end{equation}
Its \emph{representation} is given by
assigning to each vertex a complex
vector space, to each full arrow a
linear mapping, and to each dashed
arrow a semilinear mapping of the
corresponding vector spaces. Thus, a
representation
\begin{equation}\label{2.6aa}
\begin{split}
{\cal R}:\quad\raisebox{25pt}{\xymatrix@=2pc{
 &{U}&\\
  \save
!<-2mm,0cm>
 {V}\ar@(ul,dl)@{-->}_{\cal C}
 \restore
 \ar@{-->}[ur]^{\cal A}
  \ar@/^/@{->}[rr]^{\cal D}
 \ar@/_/@{-->}[rr]_{\cal E} &&{W}
 \ar[ul]_{\cal B}
  \save
!<2mm,0cm>
 \ar@(ur,dr)^{\cal F}
\restore}
 }\end{split}
\end{equation}
of \eqref{ksy} is formed by complex
spaces $U,V,W$, linear mappings
\[{\cal B}:W\to U,\quad {\cal D}:V\to W,
\quad {\cal F}:W\to
W,\] and semilinear mappings
\[
{\cal A}:V\dashrightarrow U,\quad
{\cal C}: V\dashrightarrow V,
\quad {\cal E}: V \dashrightarrow W.
\]
A biquiver without dashed arrows is a
quiver and its representations are the
quiver representations.

In Section \ref{s4} we prove the
following generalization of Theorem
\ref{gey}(b).

\begin{theorem}\label{gey2}
The problem of classifying pairs of
semilinear operators contains the
problem of classifying representations
of any biquiver.
\end{theorem}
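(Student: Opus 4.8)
The plan is to reduce, for each fixed biquiver $Q$, the classification of representations of $Q$ to the classification of $(p+q)$-tuples consisting of $p$ linear and $q$ semilinear operators on a single vector space, and then to quote Theorem~\ref{gey}(b).

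Let $Q$ have vertices $1,\dots,n$, full arrows $\varphi_1,\dots,\varphi_r$, and dashed arrows $\psi_1,\dots,\psi_s$; write $t(a)$ and $h(a)$ for the tail and head of an arrow $a$ (loops and repeated arrows between a pair of vertices are allowed and cause no difficulty, each arrow simply contributing one operator below). Given a representation $\mathcal R$ of $Q$, consisting of spaces $V_1,\dots,V_n$, linear maps $\Phi_k\colon V_{t(\varphi_k)}\to V_{h(\varphi_k)}$ and semilinear maps $\Psi_l\colon V_{t(\psi_l)}\dashrightarrow V_{h(\psi_l)}$, I would put $V:=V_1\oplus\dots\oplus V_n$ and attach to $\mathcal R$ the tuple of operators on $V$ formed by: the $n$ linear projections $P_1,\dots,P_n$ of $V$ onto $V_1,\dots,V_n$ along the other summands (one of which may be dropped, since $P_n=I-P_1-\dots-P_{n-1}$); the $r$ linear operators $\widehat\Phi_k$ on $V$ whose only nonzero block is the $(h(\varphi_k),t(\varphi_k))$ block, which equals $\Phi_k$; and the $s$ semilinear operators $\widehat\Psi_l$ on $V$ whose only nonzero block is the $(h(\psi_l),t(\psi_l))$ block, which equals $\Psi_l$. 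This is a $(p+q)$-tuple with $p=n+r$ linear and $q=s$ semilinear operators.

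The core of the argument is to show that $\mathcal R$ and $\mathcal R'$ are isomorphic representations of $Q$ if and only if their tuples are carried to one another by a single change of basis $S$, acting as $X\mapsto SXS^{-1}$ on the linear members and as $X\mapsto SX\bar S^{-1}$ on the semilinear members. For one direction, an isomorphism $\mathcal R\to\mathcal R'$ is a family of linear isomorphisms $S_i\colon V_i\to V_i'$ intertwining all the arrow maps; setting $S:=\diagg(S_1,\dots,S_n)$ and using $\bar S=\diagg(\bar S_1,\dots,\bar S_n)$, a blockwise computation shows that $S$ sends the tuple of $\mathcal R$ to that of $\mathcal R'$. For the converse, the relations $SP_iS^{-1}=P_i'$ between idempotents force $\operatorname{rank}P_i=\operatorname{rank}P_i'$, that is, $\dim V_i=\dim V_i'$ for every $i$; identifying the two ambient spaces accordingly, $P_i=P_i'$ and $S$ commutes with every $P_i$, so $S$ is block diagonal, $S=\diagg(S_1,\dots,S_n)$, and reading the remaining relations $S\widehat\Phi_kS^{-1}=\widehat\Phi_k'$ and $S\widehat\Psi_l\bar S^{-1}=\widehat\Psi_l'$ block by block shows that the $S_i$ intertwine the arrow maps, hence constitute an isomorphism $\mathcal R\to\mathcal R'$.

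Finally, since the biquiver $Q$ (and with it $n$, $r$, $s$ and the tail/head data) is fixed, a representation $\mathcal R$ can be read off, up to isomorphism, from its tuple: the decomposition of $V$ is recovered from the projections $P_i$, and each arrow map is the distinguished nonzero block of the corresponding $\widehat\Phi_k$ or $\widehat\Psi_l$. Consequently a complete list of canonical forms for $(p+q)$-tuples of linear and semilinear operators yields one for representations of $Q$, and by Theorem~\ref{gey}(b) such a list would in turn follow from a classification of pairs of semilinear operators. The construction above is the standard block-matrix encoding of quiver-type data by operators on a single space; I expect the points needing most care in the write-up to be the bookkeeping of the complex conjugates in the semilinear blocks, the observation — indispensable for the converse direction — that the projections $P_i$ pin $S$ down to a block-diagonal matrix, and the verification that this really is a reduction of classification problems and not merely a matching of the two equivalence relations.
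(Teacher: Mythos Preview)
Your reduction is correct but follows a different route from the paper's. The paper constructs the pair $(J,M)$ directly, without passing through Theorem~\ref{gey}(b): it takes $J=J_{p_1}(0_{q_1})\oplus\dots\oplus J_{p_t}(0_{q_t})$ with $q_i=\dim\mathcal R_i$ and distinct, sufficiently large $p_i$, and scatters the arrow matrices $R_\alpha$ into individual subblocks of $M$, choosing the parity of the row substrip so that the diagonal subblock of $S$ acting on that side is $S_j$ or $\bar S_j$ according as $\alpha$ is full or dashed; Lemmas~\ref{kse} and~\ref{kse1} then give the block-triangular shape of every $S$ with $\bar SJ=JS$, and equating subblocks in $MS=\bar SM'$ reproduces the transformation rules~\eqref{kyw}. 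Your approach is more modular: the encoding of a biquiver representation by the projections $P_i$ together with one single-block operator per arrow on $V_1\oplus\dots\oplus V_n$ is the standard quiver-to-tuple device and requires none of the Weyr-matrix machinery of Section~\ref{s1}, while the second step is a black-box appeal to Theorem~\ref{gey}(b). The payoff of the paper's approach is an explicit one-shot pair $(J,M)$ in which the parity mechanism distinguishing linear from semilinear arrows is visible; the payoff of yours is a cleaner separation of concerns and an essentially elementary first step, at the cost of a less explicit final pair.
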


The results in \cite{gel-pon} ensure
that the problem of classifying pairs
of linear operators over any field
$\mathbb F$ contains the problem of
classifying $k$-tuples of linear
operators. This implies that it
contains the problem of classifying
representations of an arbitrary
$k$-dimensional algebra $\Lambda$ over
$\mathbb F$ by operators of a vector
space\footnote{The exists an
isomorphism from $\Lambda$ to a factor
algebra ${\mathbb F}\langle
x_1,\dots,x_t\rangle/J$ of the free
algebra of noncommutative polynomials
in $x_1,\dots,x_t$. Let $g_1,\dots,g_r$
be generators of $J$, then each
representation of $\Lambda$ is a
$k$-tuple of linear operators $({\cal
A}_1,\dots,{\cal A}_k)$ satisfying
$g_i({\cal A}_1,\dots,{\cal A}_k)=0$
for all $i=1,\dots,r$.}. Thus, the
problem of classifying pairs of linear
operators contains the problem of
classifying representations of any
quiver\footnote{The representations of
a quiver can be identified with the
representations of its path algebra.};
a direct proof of this inclusion is
given in \cite[Sect. 3.1]{ser_can} and
\cite{bel-ser_comp}. The problem of
classifying pairs of linear operators
also contains the problem of
classifying any system of linear
mappings and bilinear or sesquilinear
forms because the latter problem can be
reduced to the problem of classifying
quiver representations (see
\cite{ser_izv,ser_surv,ser_isom}).

For this reason, the problem of
classifying pairs of linear operators
is used in representation theory as a
measure of complexity: all
classification problems split into two
types: {\it tame} (or classifiable) and
{\it wild} (containing the problem of
classifying pairs of linear operators);
wild problems are considered as
hopeless. These terms were introduced
by Donovan and Freislich \cite{don} in
analogy with the partition of animals
into tame and wild ones. It follows
from Theorem \ref{gey2} that the
problem of classifying pairs of
semilinear operators plays the same
role in the theory of systems of linear
and semilinear mappings.

\section{Semilinear operators commuting
with a nilpotent semilinear operator}
\label{s1}

In this section, we describe all
semilinear operators that commute with
a given nilpotent semilinear operator,
but first we recall basic facts about
semilinear mappings. All vector spaces
and matrices that we consider are over
the field of complex numbers.

We denote by $\bar a$ the complex
conjugate of $a\in \mathbb C$, by
$[v]_e$ the coordinate vector of $v$ in
a basis $e_1,\dots,e_n$, and by
$S_{e\to e'}$ the transition matrix
from a basis $e_1,\dots,e_n$ to a basis
$e'_1,\dots,e'_n$.  If $A=[a_{ij}]$
then $\bar A:=[\bar a_{ij}]$.

Let ${\cal A}: U\dashrightarrow V$  be
a semilinear mapping. We say that an
$m\times n$ matrix ${\cal A}_{fe}$ is
the \emph{matrix of $\cal A$ in bases}
$e_1,\dots,e_n$ of $U$ and
$f_1,\dots,f_m$ of $V$ if
\begin{equation}\label{feo}
[{\cal A}u]_f=\overline{{\cal A}_{fe}[u]_e}\qquad
\text{for all }u\in U.
\end{equation}
Therefore, the columns of ${\cal
A}_{fe}$ are $\overline{[{\cal
A}e_1]_f}, \dots, \overline{[{\cal
A}e_n]_f}$. We write ${\cal A}_{e}$
instead of ${\cal A}_{ee}$ if $U=V$.

If $e'_1,\dots,e'_n$ and
$f'_1,\dots,f'_m$ are other bases of
$U$ and $V$, then
\begin{equation*}\label{swk}
{\cal A}_{f'e'}=\bar S_{f\to f'}^{-1}{\cal A}_{fe}
S_{e\to e'}
\end{equation*}
since the right hand matrix satisfies
\eqref{feo} with $e',f'$ instead of
$e,f$:
\[
\overline{\bar S_{f\to f'}^{-1}{\cal A}_{fe}
S_{e\to e'}[v]_{e'}}=
S_{f\to f'}^{-1}\overline{{\cal A}_{fe}
[v]_{e}}=S_{f\to f'}^{-1}[{\cal A}
v]_f=[{\cal A}
v]_{f'}
\]

In particular, if $U=V$, then
\begin{equation*}\label{swk1}
{\cal A}_{e'}=\bar S_{e\to e'}^{-1}{\cal A}_{e}
S_{e\to e'}
\end{equation*}
and so ${\cal A}_{e'}$ and ${\cal
A}_{e}$ are consimilar: recall that two
matrices $A$ and $B$ are
\emph{consimilar} if there exists a
nonsingular matrix $S$ such that $\bar
S^{-1}AS=B$ (see \cite[Section
4.6]{hor_John}). Two pairs $(A_1,A_2)$
and $(B_1,B_2)$ of $n\times n$ matrices
are called \emph{consimilar} if there
exists a nonsingular matrix $S$ such
that \[\bar S^{-1}(A_1, A_2)S:=(\bar
S^{-1}A_1S, \bar
S^{-1}A_2S)=(B_1,B_2).\]

Thus, the problem of classifying pairs
of semilinear operators reduces to the
problem of classifying matrix pairs up
to consimilarity.

\begin{lemma}\label{kow}
The composition of two semilinear
operators ${\cal A}: U\dashrightarrow
U$ and ${\cal B}: U\dashrightarrow U$
is a linear operator and its matrix in
a basis $e_1,\dots,e_n$ of $U$ is
\begin{equation}\label{y49}
({\cal AB})_e=\bar{\cal A}_e{\cal B}_e
\end{equation}
\end{lemma}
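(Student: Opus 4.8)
The plan is to verify linearity of the composite first and then read the matrix formula directly off the defining relation \eqref{feo}. Additivity of $\mathcal{AB}=\mathcal A\circ\mathcal B$ is immediate from additivity of $\mathcal A$ and of $\mathcal B$. For homogeneity, take $\alpha\in\mathbb C$ and $u\in U$ and compute $\mathcal{AB}(\alpha u)=\mathcal A(\bar\alpha\,\mathcal Bu)=\overline{\bar\alpha}\,\mathcal A\mathcal Bu=\alpha\,\mathcal{AB}u$: the two semilinear factors each contribute a conjugation, and the two conjugations cancel, so $\mathcal{AB}$ is a genuine linear operator on $U$.

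For the matrix identity, I would apply \eqref{feo} twice, using the bar-free convention $[\mathcal T u]_e=\mathcal T_e[u]_e$ for the matrix of a linear operator $\mathcal T$ (consistent with the consimilarity relations recorded above). Put $v=\mathcal Bu$; then $[v]_e=\overline{\mathcal B_e[u]_e}$ and $[\mathcal A v]_e=\overline{\mathcal A_e[v]_e}$. Substituting and using $\overline{\overline{x}}=x$ together with $\overline{XY}=\bar X\,\bar Y$ yields
\[
[\mathcal{AB}u]_e=\overline{\mathcal A_e\,\overline{\mathcal B_e[u]_e}}=\bar{\mathcal A}_e\,\overline{\overline{\mathcal B_e[u]_e}}=\bar{\mathcal A}_e\,\mathcal B_e\,[u]_e,
\]
valid for every $u\in U$; since $[\mathcal{AB}u]_e=(\mathcal{AB})_e[u]_e$ as well, this forces $(\mathcal{AB})_e=\bar{\mathcal A}_e\mathcal B_e$, which is \eqref{y49}.

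There is no serious obstacle here; the only thing to get right is the bookkeeping of conjugations — recognizing that the composite, being linear, acts on coordinate vectors without an outer bar, whereas each semilinear factor carries one bar, so exactly one bar survives and it lands on $\mathcal A_e$. As a consistency check (not part of the proof) one may conjugate ${\cal A}_{e'}=\bar S_{e\to e'}^{-1}{\cal A}_e S_{e\to e'}$ and ${\cal B}_{e'}=\bar S_{e\to e'}^{-1}{\cal B}_e S_{e\to e'}$ and multiply: the bars cancel and one recovers the ordinary linear change-of-basis rule $(\mathcal{AB})_{e'}=S_{e\to e'}^{-1}(\mathcal{AB})_e S_{e\to e'}$, as it must.
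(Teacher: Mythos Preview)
Your proof is correct and follows essentially the same approach as the paper: both apply the defining relation \eqref{feo} twice and track the conjugations to obtain $[\mathcal{AB}u]_e=\bar{\mathcal A}_e\mathcal B_e[u]_e$. The only difference is that you spell out the linearity check and add a change-of-basis consistency remark, whereas the paper simply asserts that $\mathcal{AB}$ is linear and writes the chain of equalities in the reverse order.
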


\begin{proof}
The identity \eqref{y49} follows from
observing that ${\cal AB}$ is a linear
operator and
\[
\bar{\cal A}_e{\cal B}_e[u]_e=
\overline{{\cal A}_e[{\cal B}u]_e}=
[{\cal A}({\cal B}u)]_e=
[({\cal A}{\cal B})u]_e\qquad
\text{for each $u\in U$}.
\]
\end{proof}

A canonical form of a matrix under
consimilarity is given in \cite[Theorem
3.1]{hon-hor}. In particular, each
nilpotent matrix is consimilar to a
nilpotent Jordan matrix that is
determined uniquely up to permutation
of Jordan blocks. Each nilpotent Jordan
matrix is \emph{permutationally
similar} (i.e., is reduced by
simultaneous permutations of rows and
columns) to the form
\begin{equation}\label{jer}
J:=J_{p_1}(0_{q_1})\oplus\dots\oplus
J_{p_t}(0_{q_t}),\qquad
p_i\ne p_j\text{ if }i\ne j,
\end{equation}
in which
\[
J_{p_i}(0_{q_i}):=\begin{bmatrix}
     0_{q_i}&I_{q_i}&&0\\&0_{q_i}&\ddots
     \\&&\ddots&I_{q_i}
     \\0&&&0_{q_i}\\
   \end{bmatrix}\qquad(p_i\times p_i\text{
   subblocks of size }q_i\times q_i).
\]
We consider $J$ as a block matrix
$[J_{ij}]_{i,j=1}^t$; each block
$J_{ij}$ is $ p_iq_i\times p_jq_j$ and
is partitioned into $p_i\times p_j$
subblocks of size $q_i\times q_j$.

All matrices that commute with a given
square matrix are described in
\cite[Sect. VIII, \S 2]{gan}. In the
following lemma, we give an analogous
description of all matrices $S$
satisfying $\bar SJ=JS$.

\begin{lemma}\label{kse}
{\rm(a)} For each nilpotent semilinear
operator ${\cal J}:U\dashrightarrow U$
there exists a basis in which its
matrix has the form \eqref{jer}. If
${\cal S}:U\dashrightarrow U$ is
another semilinear operator and $S$ is
its matrix  in the same basis, then
${\cal S}{\cal J}={\cal J}{\cal S}$ if
and only if $\bar S J= JS$.

{\rm(b)} Let $J$ be the matrix
\eqref{jer}, let $S$ be a matrix of the
same size, and let $S$ be partitioned
into blocks and subblocks conformally
to the partition of $J$. Then $\bar S
J= JS$ if and only if
$S=[S_{ij}]_{i,j=1}^t$, in which every
$S_{ij}$ is a $p_iq_i\times p_jq_j$
block of the form
\begin{equation}\label{gny}
S_{ij}=
  \begin{cases}
   \begin{bmatrix}
      &\quad&C_{ij}&C'_{ij}&C''_{ij}&C'''_{ij}&\dots
      &C_{ij}^{(p_i-1)}\\
      &&& \bar C_{ij} & \bar C'_{ij}&
      \bar C''_{ij}
      &\dots&\bar C_{ij}^{(p_i-2)} \\
      &&&&C_{ij}&C'_{ij}&\dots&C_{ij}^{(p_i-3)}\\
&&&&&\bar C_{ij}&\dots&\bar C_{ij}^{(p_i-4)}\\
      &&&&&&\ddots &  \vdots \\
0&&&& & &&\hat C_{ij} \\
   \end{bmatrix} & \text{if }p_i\le p_j, \\[65pt]
\begin{bmatrix}
      C_{ij}&C'_{ij}&C''_{ij}&C'''_{ij}
      &\dots&C_{ij}^{(p_j-1)}\\
      & \bar C_{ij} & \bar C'_{ij}
      & \bar C''_{ij}&\dots&\bar C_{ij}^{(p_j-2)} \\
      &&C_{ij}&C'_{ij}&\dots&C_{ij}^{(p_j-3)} \\
      &&&\bar C_{ij}&\dots&\bar C_{ij}^{(p_j-4)} \\
      &&&&\ddots &  \vdots \\
&&&  &&\hat C_{ij} \\[5mm]
0
   \end{bmatrix} & \text{if }p_i\ge p_j
  \end{cases}
\end{equation}
and \[\hat C_{ij}=
  \begin{cases}
    C_{ij} & \text{if $\min(p_i,p_j)$ is odd}, \\
    \bar C_{ij} & \text{if $\min(p_i,p_j)$ is even}.
  \end{cases}\]
\end{lemma}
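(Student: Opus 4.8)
The plan is to read part (a) off the material already developed and to reduce part (b) to solving an explicit recurrence for the subblocks of $S$. For part (a): in an arbitrary basis the matrix ${\cal J}_e$ is nilpotent, hence consimilar to a nilpotent Jordan matrix by the canonical form under consimilarity \cite[Theorem 3.1]{hon-hor}, and a nilpotent Jordan matrix is permutationally similar to a matrix $J$ of the form \eqref{jer}. Since ${\cal A}_{e'} = \bar S_{e\to e'}^{-1}{\cal A}_e S_{e\to e'}$, both a consimilarity and a permutation similarity of ${\cal J}_e$ are realized by a change of basis, so there is a basis in which ${\cal J}$ has matrix $J$. Fix it. The matrix $J$ is real, so $\bar J = J$; hence by Lemma \ref{kow}, $({\cal S}{\cal J})_e = \bar{\cal S}_e{\cal J}_e = \bar S J$ while $({\cal J}{\cal S})_e = \bar{\cal J}_e{\cal S}_e = \bar J S = JS$. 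Two linear operators coincide iff their matrices in one basis do, so ${\cal S}{\cal J} = {\cal J}{\cal S}$ if and only if $\bar S J = JS$.

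For part (b): write $J = J_{11}\oplus\cdots\oplus J_{tt}$ with $J_{ii} := J_{p_i}(0_{q_i})$, and $S = [S_{ij}]$ conformally. Comparing $(i,j)$ blocks, $\bar S J = JS$ is equivalent to the system $\bar S_{ij}J_{jj} = J_{ii}S_{ij}$ for $i,j = 1,\dots,t$. Partition $S_{ij}$ into $p_i\times p_j$ subblocks $\sigma_{kl}$ of size $q_i\times q_j$. Since the nonzero subblocks of $J_{ii}$ are the identities $I_{q_i}$ in the positions $(k,k+1)$, and similarly for $J_{jj}$, forming the two block products turns $\bar S_{ij}J_{jj} = J_{ii}S_{ij}$ into
\[
\overline{\sigma_{k,\,l-1}} \;=\; \sigma_{k+1,\,l}\qquad(1\le k\le p_i,\ 1\le l\le p_j),
\]
with the conventions $\sigma_{k,0} := 0$ and $\sigma_{p_i+1,\,l} := 0$.

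Solving this recurrence finishes the proof. Putting $l=1$ gives $\sigma_{k,1}=0$ for $k\ge 2$, and putting $k=p_i$ gives $\sigma_{p_i,l}=0$ for $l\le p_j-1$. Each remaining relation expresses a subblock as the complex conjugate of the subblock one position up and to the left of it, so along each diagonal $l-k=\text{const}$ every subblock is obtained from the topmost one by repeated conjugation; unwinding such a chain reaches either the first row or the first column of $S_{ij}$, and in the latter case (which forces column index $1$ and row index $\ge 2$) the subblock equals $0$. Consequently the nonzero subblocks of $S_{ij}$ are exactly those reachable from a first-row subblock $\sigma_{1l}$ that is not itself forced to vanish, and a short check shows that $\sigma_{1l}$ is free precisely for $\max(p_j-p_i,\,0)<l\le p_j$. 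Naming these free $q_i\times q_j$ matrices $C_{ij},C'_{ij},\dots$ and propagating each of them down its diagonal with alternating conjugates reproduces the two staircase shapes of \eqref{gny}; since the subblock carrying $\hat C_{ij}$ is reached from $C_{ij}$ after $\min(p_i,p_j)-1$ conjugations, it equals $C_{ij}$ when $\min(p_i,p_j)$ is odd and $\bar C_{ij}$ when it is even.

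I expect the main obstacle to be exactly this last combinatorial bookkeeping in part (b). The conceptual steps — the decoupling into the subblock equations and the recurrence above — are routine, but one must keep the ranges $p_i\le p_j$ and $p_i\ge p_j$ apart, correctly identify the free first-row subblocks, and track the parity of the conjugations carefully, so that \eqref{gny} and the rule for $\hat C_{ij}$ emerge in precisely the stated form.
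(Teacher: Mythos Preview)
Your argument is essentially the paper's own, only spelled out in more detail: the paper proves (a) in one line by citing Lemma~\ref{kow} and the consimilarity canonical form, and proves (b) by the same block decoupling $\bar S_{ij}J_{p_j}(0_{q_j})=J_{p_i}(0_{q_i})S_{ij}$ followed by a subblock comparison (the paper walks the subblocks column by column from $(p_i,1)$ up to $(1,p_j)$, whereas you walk them along diagonals via the recurrence $\overline{\sigma_{k,l-1}}=\sigma_{k+1,l}$; both traversals extract the same information).

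One small slip in part (a): the sentence ``in an arbitrary basis the matrix ${\cal J}_e$ is nilpotent'' is not correct. Nilpotence of the \emph{semilinear} operator ${\cal J}$ gives only that $\bar{\cal J}_e{\cal J}_e$ is nilpotent (via Lemma~\ref{kow}), and consimilarity does not preserve ordinary nilpotence of a matrix; for instance $A=\begin{pmatrix}-i&1\\-1&-i\end{pmatrix}$ satisfies $\bar A A=0$ and is consimilar to $J_2(0)$, yet $\operatorname{tr}A=-2i\neq 0$. What you actually need---and what \cite[Theorem~3.1]{hon-hor} gives---is that $\bar{\cal J}_e{\cal J}_e$ nilpotent forces the consimilarity canonical form of ${\cal J}_e$ to be a direct sum of blocks $J_k(0)$. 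Replace the offending clause by this and your proof of (a) is complete; part (b) is fine as written.
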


For example, if $J=J_4(0_q)\oplus
J_2(0_{q'})$ and $\bar S J= JS$, then
\begin{equation}\label{dsu}
\ \begin{matrix}
J= \\ \\
\end{matrix}\
\begin{MAT}(e){ccccccc}
\,0_q\,&I_q&&   &&&\scriptstyle \it 1\!,\!1\\
&0_q&I_q&   &&&\scriptstyle \it 2\!,\!1\\
&&0_q&I_q   &&&\scriptstyle \it 3\!,\!1\\
&&&0_q    &&&\scriptstyle \it 4\!,\!1\\
&&&   &0_{q'}&I_{q'}&\scriptstyle \it 1\!,2\\
&&&   &&0_{q'}&\scriptstyle \it 2\!,2\\
\scriptstyle \it 1\!,\!1&\scriptstyle
\it 2\!,\!1&
\scriptstyle \it 3\!,\!1&\scriptstyle
\it 4\!,\!1&
\scriptstyle \it 1\!,2&\scriptstyle \it 2\!,2&
\addpath{(0,1,4)rrrrrruuuuuulllllldddddd}
\addpath{(4,1,1)uuuuuu}
\addpath{(0,3,1)rrrrrr}\\
\end{MAT}\quad
\begin{matrix}
S=\,\\ \\
\end{matrix}\
\begin{MAT}(e){ccccccc}
C&C_1&C_2&C_3   &D&D_1&\scriptstyle \it 1\!,\!1\\
&\bar C&\bar C_1&\bar C_2   &&\bar D
&\scriptstyle \it 2\!,\!1\\
&&C&C_1   &&&\scriptstyle \it 3\!,\!1\\
&&&\bar C    &&&\scriptstyle \it 4\!,\!1\\
&&E&E_1   &F&F_1&\scriptstyle \it 1\!,2\\
&&&\bar E   &&\bar F&\scriptstyle \it 2\!,2\\
\scriptstyle \it 1\!,\!1&\scriptstyle
\it 2\!,\!1&
\scriptstyle \it 3\!,\!1&\scriptstyle
\it 4\!,\!1&
\scriptstyle \it 1\!,2&\scriptstyle
\it 2\!,2&
\addpath{(0,1,4)rrrrrruuuuuulllllldddddd}
\addpath{(4,1,1)uuuuuu}
\addpath{(0,3,1)rrrrrr}\\
\end{MAT}
\end{equation}
(unspecified blocks are zero).

\begin{proof}[Proof of Lemma \ref{kse}]
(a) This statement follows from Lemma
\ref{kow} and the canonical form of a
matrix under consimilarity
\cite{hon-hor}.

(b) We have $\bar S J= JS$ if and only
if
\begin{equation}\label{lut}
\bar S_{ij}
J_{p_j}(0_{q_j})= J_{p_i}(0_{q_i})
S_{ij}\qquad \text{for all }i,j=1,\dots,t.
\end{equation}

Assuming \eqref{lut}, we verify that
each $S_{ij}$ has the form \eqref{gny}
as follows: divide $S_{ij}$ into
$p_i\times p_j$ subblocks of size
$q_i\times q_j$ and compare subblocks
in the identity $\bar S_{ij}
J_{p_j}(0_{q_j})= J_{p_i}(0_{q_i})
S_{ij}$ starting in subblock $(p_i,1)$,
moving along vertical strips from
bottom to up, and finishing in subblock
$(1,p_j)$.

Conversely, if all $S_{ij}$ have the
form \eqref{gny}, then \eqref{lut}
holds.
\end{proof}

Let $M$ be an arbitrary block matrix
partitioned into strips and substrips
such that all diagonal blocks and
subblocks are square. We index the
$\alpha $th substrip of $i$th strip by
the pair $\alpha ,\!i$ (as in
\eqref{dsu}). Denote by $M^{\#}$ the
block matrix obtained from $M$ by
permuting its substrips so that their
index pairs form a lexicographically
ordered sequence. For example, if $J$
and $S$ are the block matrices
\eqref{dsu}, then
\begin{equation}\label{cev}
\begin{matrix}
J^{\#}=\,\\ \\
\end{matrix}\
\begin{MAT}(e){ccccccc}
\,0_q\,&&I_q&   &&&\scriptstyle \it 1\!,\!1\\
&0_{q'}&&I_{q'}   &&&\scriptstyle \it 1\!,2\\
&&0_q&   &I_q&&\scriptstyle \it 2\!,\!1\\
&&&0_{q'}    &&&\scriptstyle \it 2\!,2\\
&&&   &0_{q}&I_{q}&\scriptstyle \it 3\!,\!1\\
&&&   &&0_{q}&\scriptstyle \it 4\!,\!1\\
\scriptstyle \it 1\!,\!1&\scriptstyle
\it 1\!,2&
\scriptstyle \it 2\!,\!1&\scriptstyle
\it 2\!,2&
\scriptstyle \it 3\!,\!1&\scriptstyle
\it 4\!,\!1&
\addpath{(0,1,4)rrrrrruuuuuulllllldddddd}
\addpath{(4,1,1)uuuuuu}
\addpath{(2,1,1)uuuuuu}
\addpath{(5,1,1)uuuuuu}
\addpath{(0,2,1)rrrrrr}
\addpath{(0,5,1)rrrrrr}
\addpath{(0,3,1)rrrrrr}\\
\end{MAT}\quad
\begin{matrix}
S^{\#}=\,\\ \\
\end{matrix}\
\begin{MAT}(e){ccccccc}
C&D&C_1&D_1   &C_2&C_3&\scriptstyle \it 1\!,\!1\\
& F&&F_1   &E&E_1&
\scriptstyle \it 1\!,2\\
&&\bar C&\bar D &\bar C_1&\bar C_2&\scriptstyle
\it 2\!,\!1\\
&&&\bar F    &&\bar E&\scriptstyle \it 2\!,2\\
&&&   &C&C_1&\scriptstyle \it 3\!,\!1\\
&&&   &&\bar C&\scriptstyle \it 4\!,\!1\\
\scriptstyle \it 1\!,\!1&\scriptstyle \it
1\!,2&
\scriptstyle \it 2\!,\!1&\scriptstyle
\it 2\!,2&
\scriptstyle \it 3\!,\!1&\scriptstyle
\it 4\!,\!1&
\addpath{(0,1,4)rrrrrruuuuuulllllldddddd}
\addpath{(4,1,1)uuuuuu}
\addpath{(2,1,1)uuuuuu}
\addpath{(5,1,1)uuuuuu}
\addpath{(0,2,1)rrrrrr}
\addpath{(0,5,1)rrrrrr}
\addpath{(0,3,1)rrrrrr}\\
\end{MAT}
\end{equation}
The block matrix $M^{\#}$ can be
obtained from $M$ as follows: we gather
at the top the first substrips of all
horizontal strips, we dispose all
second substrips under them, and so on.
Finally, we make the same permutation
of vertical substrips.

Suppose that the direct summands in
\eqref{jer} are numbered so that
\begin{equation}\label{jer7}
p_1>p_2>\dots>p_t.
\end{equation}
Then the block matrix $J^{\#}$ (which
is permutationally similar to a
nilpotent Jordan matrix) is a
\emph{nilpotent Weyr matrix}; see
\cite{mear} or \cite{ser_can}. The
second matrix in \eqref{cev} is block
triangular; in the following lemma we
prove that $S^{\#}$ is  block
triangular for all nilpotent Weyr
matrices. This property is a minor
modification (in the nilpotent case) of
the most important property of Weyr
matrices, which was discovered by
Belitskii \cite{bel} (see also
\cite{bel1,ser_can}): all matrices
commuting with a Weyr matrix are block
triangular.

\begin{lemma}\label{kse1}
{\rm(a)} Let $J^{\#}$ be a nilpotent
Weyr matrix. Then a matrix $X$
satisfies $\bar XJ^{\#}=J^{\#}X$ if and
only if $X=S^{\#}$ for some block
matrix $S$ of the form described in
Lemma \ref{kse}. The matrix $S^{\#}$ is
upper block triangular with respect to
the partition obtained from the
partition of $S$ by the above-described
permutation of substrips.

{\rm(b)} A matrix $S$ of the form
described in Lemma \ref{kse} is
nonsingular if and only if all diagonal
subblocks $C_{ii}$ on its main diagonal
\begin{equation*}\label{dw0}
(C_{11},\bar C_{11},\dots|C_{22},\bar C_{22},\dots
|\dots|C_{tt},\bar C_{tt},\dots)
\end{equation*}
are nonsingular.
\end{lemma}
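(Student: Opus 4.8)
The plan is to reduce both parts of the lemma to Lemma~\ref{kse} by transporting its conclusions through the substrip permutation $M\mapsto M^{\#}$. This permutation acts in the same way on the row substrips and the column substrips, so it is conjugation by a single permutation matrix: there is a real permutation matrix $P$ with $M^{\#}=PMP^{-1}$ for every block matrix $M$ partitioned conformally to $J$, and, since $P$ is real, $P\bar YP^{-1}=\overline{PYP^{-1}}$ for every $Y$.

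For part~(a) I would first establish the bijection $X\leftrightarrow S$, where $S=P^{-1}XP$. Starting from $\bar XJ^{\#}=J^{\#}X$, I substitute $J^{\#}=PJP^{-1}$ and multiply by $P^{-1}$ on the left and by $P$ on the right to obtain $\overline{P^{-1}XP}\,J=J\,(P^{-1}XP)$; thus $S:=P^{-1}XP$ satisfies $\bar SJ=JS$, so by Lemma~\ref{kse}(b) it has the form \eqref{gny}, and $X=PSP^{-1}=S^{\#}$. The converse is the same computation run backwards. For the triangularity assertion I would read off from \eqref{gny} the uniform rule that the $(\alpha,\beta)$ subblock of $S_{ij}$ is zero unless $\alpha-\beta\le\min(0,\,p_i-p_j)$. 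Since the subblock of $S^{\#}$ indexed by the pair of substrips $((\alpha,i),(\beta,j))$ is exactly the $(\alpha,\beta)$ subblock of $S_{ij}$, and since $(\alpha,i)$ strictly follows $(\beta,j)$ in lexicographic order precisely when $\alpha>\beta$, or $\alpha=\beta$ and $i>j$, I would dispose of the two cases using the numbering \eqref{jer7}: if $\alpha>\beta$ then $\alpha-\beta\ge1>\min(0,p_i-p_j)$, and if $\alpha=\beta$ with $i>j$ then $p_i<p_j$ and $\alpha-\beta=0>p_i-p_j$, so in both cases the subblock vanishes. Hence $S^{\#}$ is upper block triangular, and the same inspection of \eqref{gny} shows that its diagonal subblock indexed by $(\alpha,i)$ is the $(\alpha,\alpha)$ subblock of $S_{ii}$, namely $C_{ii}$ when $\alpha$ is odd and $\bar C_{ii}$ when $\alpha$ is even.

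For part~(b) I would use that a square matrix is nonsingular if and only if any simultaneous permutation of its rows and columns is, so that $S$ is nonsingular iff $S^{\#}$ is, together with the fact that an upper block triangular matrix is nonsingular iff all of its diagonal blocks are. By part~(a) those diagonal blocks are the matrices $C_{11},\bar C_{11},\dots\,|\,\dots\,|\,C_{tt},\bar C_{tt},\dots$, and $\bar C_{ii}$ is nonsingular exactly when $C_{ii}$ is; this yields the stated criterion.

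The step I expect to need the most care is the bookkeeping in part~(a): one must line up the positions of the subblocks inside \eqref{gny}, the lexicographic order on the substrip indices $(\alpha,i)$, and the parity convention in $\hat C_{ij}$, so that the zero pattern of $S^{\#}$ comes out exactly upper block triangular with the asserted diagonal. Once \eqref{gny} is granted from Lemma~\ref{kse}, everything else is formal.
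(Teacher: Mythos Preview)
Your proposal is correct and follows essentially the same route as the paper: both transport the equation through the permutation $P$ to invoke Lemma~\ref{kse}(b), then read off upper block triangularity of $S^{\#}$ from the shape \eqref{gny}, and both deduce (b) from the block-triangular form. The only cosmetic difference is in the triangularity bookkeeping: the paper tracks the superscript $k$ in $C_{ij}^{(k)}$ and observes $\beta=\alpha+k\ge\alpha$, whereas you extract the equivalent inequality $\alpha-\beta\le\min(0,p_i-p_j)$ directly from \eqref{gny}; both arguments use \eqref{jer7} in the same way to handle the case $\alpha=\beta$.
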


\begin{proof}
(a) Let $\bar XJ^{\#}=J^{\#}X$. Since
$J^{\#}$ is permutationally similar to
$J$, there is a permutation matrix $P$
such that $J^{\#}=P^{-1}JP$. Since
\[
P\bar XP^{-1}PJ^{\#}P^{-1}=PJ^{\#}P^{-1}PXP^{-1},
\]
we have $\bar SJ=JS$, in which
$S=PXP^{-1}$. Then $X=P^{-1}SP=S^{\#}$
and $S$ has the form described in Lemma
\ref{kse}(b). Only subblocks
$C_{ij}^{(k)}$ ($k=0,1,\dots$) of $S$
can be nonzero. Each subblock
$C_{ij}^{(k)}$ is at the intersection
of horizontal and vertical substrips
indexed by pairs $\alpha,\!i$ and
$\beta\!,\!j$ in which $\beta =\alpha
+k$, hence $\alpha\le\beta$. If
$\alpha=\beta$ then $i\le j$ by
\eqref{jer7}, which proves that
$S^{\#}$  is upper block triangular.

(b) Each matrix $S$ of the form
described in Lemma \ref{kse} is
nonsingular if and only the upper block
triangular matrix $S^{\#}$ is
nonsingular if and only if its diagonal
subblocks $C_{ii}$ and $\bar C_{ii}$
are nonsingular.
\end{proof}

\section{Proof of Theorem \ref{gey}(a)}
\label{s2}

The matrices
\begin{equation}\label{gtr}
J:=\left[ \begin{array}{cccc|c}
0&I&0&0&0\\0&0&I&0&0\\0&0&0&I&0\\0&0&0&0&0\\
\hline 0&0&0&0&0
\end{array}  \right],
     \qquad
M:=\left[ \begin{array}{cccc|c}
0&0&X&0&Y\\0&0&0&\bar X&0\\0&0&0&0&0\\0&0&0&0&0\\
\hline 0&0&0&I&0
\end{array}  \right],
\end{equation}
in which all blocks are $n$-by-$n$ and
the blocks $X$ and $Y$ are arbitrary,
satisfy $\bar MJ=JM$. They define
commuting semilinear operators by Lemma
\ref{kse}(a).

Write
\begin{equation}\label{trw}
M':=\left[ \begin{array}{cccc|c}
0&0&X'&0&Y'\\0&0&0&\overline{ X'}&0\\0&0&0&0&0
\\0&0&0&0&0\\
\hline 0&0&0&I&0
\end{array}  \right].
\end{equation}

The following lemma completes the proof
of  Theorem \ref{gey}(a).

\begin{lemma}\label{lir}
The pairs $(J,M)$ and $(J,M')$ defined
in \eqref{gtr} and \eqref{trw} are
consimilar if and only if $(X,Y)$ and
$(X',Y')$ are consimilar.
\end{lemma}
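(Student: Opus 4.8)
The plan is to reduce both implications of the ``if and only if'' to short block multiplications with the $5\times5$ block matrices of \eqref{gtr} and \eqref{trw}, all blocks being $n\times n$. The key preliminary observation is that $J=J_4(0_n)\oplus J_1(0_n)$ has the form \eqref{jer} with $t=2$, $p_1=4$, $p_2=1$, $q_1=q_2=n$; hence Lemma~\ref{kse}(b) describes exactly the matrices $S$ satisfying $\bar SJ=JS$, namely
\[
S=\begin{bmatrix}
C&C_1&C_2&C_3&D\\
0&\bar C&\bar C_1&\bar C_2&0\\
0&0&C&C_1&0\\
0&0&0&\bar C&0\\
0&0&0&E&F
\end{bmatrix}
\]
with arbitrary $n\times n$ blocks $C,C_1,C_2,C_3,D,E,F$, and by Lemma~\ref{kse1}(b) such an $S$ is nonsingular precisely when $C$ and $F$ are nonsingular.

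For the implication $(\Leftarrow)$, suppose $(X,Y)$ and $(X',Y')$ are consimilar, so that $X'=\bar R^{-1}XR$ and $Y'=\bar R^{-1}YR$ for some nonsingular $R$. I would take $S:=\diagg(R,\bar R,R,\bar R,R)$, the special case $C=F=R$, $C_1=C_2=C_3=D=E=0$ of the form above, so that $S$ is nonsingular and $\bar S^{-1}JS=J$. Because $S$ is block-diagonal, conjugation by it acts on $M$ blockwise, and the only nonzero blocks of $M$ are $M_{13}=X$, $M_{15}=Y$, $M_{24}=\bar X$, $M_{54}=I$; under $\bar S^{-1}(\cdot)S$ these become $\bar R^{-1}XR=X'$, $\bar R^{-1}YR=Y'$, $R^{-1}\bar X\bar R=\overline{X'}$, and $\bar R^{-1}\bar R=I$, respectively. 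Hence $\bar S^{-1}MS=M'$, and $(J,M)$, $(J,M')$ are consimilar.

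For the implication $(\Rightarrow)$, suppose $\bar S^{-1}(J,M)S=(J,M')$ for some nonsingular $S$. From $\bar S^{-1}JS=J$, i.e.\ $\bar SJ=JS$, the matrix $S$ has the displayed form, and $C,F$ are nonsingular. I would then rewrite $\bar S^{-1}MS=M'$ as $MS=\bar SM'$ and compare the two sides block by block: the $(5,4)$ block gives $\bar C=\bar F$, hence $F=C$; the $(1,3)$ block gives $XC=\bar CX'$, hence $X'=\bar C^{-1}XC$; and the $(1,5)$ block gives $YF=\bar CY'$, hence, using $F=C$, $Y'=\bar C^{-1}YC$. Therefore $(X',Y')=(\bar C^{-1}XC,\bar C^{-1}YC)$, so $(X,Y)$ and $(X',Y')$ are consimilar via $R=C$.

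I expect the only real effort here to be bookkeeping: reading off the precise shape of $S$ for this particular $J$ from Lemma~\ref{kse}(b), and tracking which blocks of the products $MS$ and $\bar SM'$ are relevant. No genuine obstacle arises --- the two block identities not used above, in positions $(1,4)$ and $(2,4)$, are automatically consistent: the $(2,4)$ identity follows from the $(1,3)$ one, while the $(1,4)$ identity involves only the auxiliary blocks $C_1,D,E$, which do not enter the conclusion.
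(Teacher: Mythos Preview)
Your proof is correct and follows essentially the same approach as the paper: both directions use the structure of $S$ from Lemma~\ref{kse}(b), compare blocks of $MS=\bar SM'$ to extract $XC=\bar CX'$, $YF=\bar CY'$, $\bar C=\bar F$, and take $S=\diagg(C,\bar C,C,\bar C,C)$ for the converse. Your explicit appeal to Lemma~\ref{kse1}(b) for the nonsingularity of $C$ is a detail the paper leaves implicit.
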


\begin{proof}
Suppose that there is a nonsingular $S$
such that $\bar S^{-1}(J,M)S=(J,M')$.
Then $JS=\bar SJ$, and by Lemma
\ref{kse}(b)
\[
S=\left[ \begin{array}{cccc|c}
C&C_1&C_2&C_3&D\\0&\overline{C}&\overline{C_1}
&\overline{C_2}&0\\0&0&C&C_1&0\\0&0&0&\overline{C}&0\\
\hline 0&0&0&E&F
\end{array}  \right].
\]
Since $MS=\bar SM'$, we have
\[
\left[ \begin{array}{cccc|c}
0&0&XC&XC_1+YE&YF
\\0&0&0&\overline{X}\overline{C}&0
\\0&0&0&0&0\\0&0&0&0&0\\
\hline 0&0&0&\phantom{\hat{F}}\overline{C}
\phantom{\hat{F}}&0
\end{array}  \right]
     =
\left[ \begin{array}{cccc|c}
0&0&\overline{C}X'
&\overline{C_1}\overline{X'}+\overline{D}
&\overline{C}Y'
\\0&0&0&C\overline{X'}&0
\\0&0&0&0&0\\0&0&0&0&0\\
\hline 0&0&0&\phantom{\hat{F}}\overline{F}
\phantom{\hat{F}}&0
\end{array}  \right],
\]
which implies $XC=\overline{C}X'$,
$YF=\overline{C}Y'$, and
$\overline{C}=\overline{F}$. Hence,
$(X,Y)C=\overline{C}(X',Y')$.

Conversely, if
$(X,Y)C=\overline{C}(X',Y')$ for some
nonsingular $S$, then $(J,M)S=\bar
S(J,M')$ for $S:=\diagg(C,\bar C,C,\bar
C,C)$.
\end{proof}

\section{Proof of Theorem \ref{gey}(b)}
\label{s3}

Let $p$ and $q$ be nonnegative integers
and let $X_1,\dots,X_p,Y_1,\dots,Y_q$
be $n\times n$ matrices. Define the
block matrix
\[
   M_{X,Y}:= \begin{bmatrix}
     0&X_1&&0\\&0&\ddots\\&&\ddots&X_p
     \\0&&&0
   \end{bmatrix}\oplus
  \begin{cases}
     Y_1\oplus Y_2\oplus\dots\oplus Y_q &
     \text{if $p$ is odd}, \\
    0\oplus Y_1\oplus Y_2\oplus\dots\oplus Y_q &
    \text{if $p$ is even},
  \end{cases}
\]
in which all blocks are $n\times n$.
Define the block matrix
\[
J:=\begin{bmatrix}
     0&I_n&&0\\&0&\ddots\\&&\ddots&I_n
\     \\0&&&0\\
   \end{bmatrix}\]
of the same size. Denote by $M_{X',Y'}$
the matrix obtained from $M_{X,Y}$ by
replacing all $X_i$ and $Y_j$ with
$X'_i$ and $Y'_j$.

Theorem \ref{gey}(b) is a consequence
of the following lemma.

\begin{lemma}
The matrix pairs $(J,M_{X,Y})$ and
$(J,M_{X',Y'})$ are consimilar if and
only if there exists a nonsingular $C$
such that
\begin{itemize}
\item[\rm(i)] all $X_{2i}$ are
    similar to $X'_{2i}$ via $C$,

\item[\rm(ii)] all $X_{2i+1}$ are
    similar to $X'_{2i+1}$ via
    $\bar C$,

  \item[\rm(iii)] all $Y_{2i+1}$
      are consimilar to $Y'_{2i+1}$
      via $C$, and
\item[\rm(iv)] all $Y_{2i}$ are
    consimilar to $Y'_{2i}$ via
    $\bar C$.
\end{itemize}
\end{lemma}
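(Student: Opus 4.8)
The plan is to exploit the fact that here $J$ is a \emph{single} nilpotent Jordan block $J_N(0_n)$, where $N$ is the number of $n\times n$ diagonal blocks of $M_{X,Y}$ (so $N=p+1+q$ if $p$ is odd and $N=p+q+2$ if $p$ is even), and then to read off the conditions (i)--(iv) by comparing individual blocks in the two matrix equations
\[
JS=\bar SJ,\qquad M_{X,Y}S=\bar SM_{X',Y'},
\]
which together are equivalent to $\bar S^{-1}(J,M_{X,Y})S=(J,M_{X',Y'})$.

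For the ``only if'' part I would start from a nonsingular $S$ satisfying the two equations above. Since $J=J_N(0_n)$ is of the form \eqref{jer} with $t=1$, Lemma~\ref{kse}(b) forces $S$ to have the form \eqref{gny}; in particular its diagonal $n\times n$ blocks are $S_{aa}=C$ when $a$ is odd and $S_{aa}=\bar C$ when $a$ is even, where $C:=S_{11}$, and $C$ is nonsingular by Lemma~\ref{kse1}(b). The only nonzero blocks of $M_{X,Y}$ are $X_k$ in position $(k,k+1)$ for $1\le k\le p$, and $Y_j$ in position $(r_j,r_j)$, where $r_j=p+1+j$ if $p$ is odd and $r_j=p+2+j$ if $p$ is even; in both cases $r_j\equiv j\pmod 2$ and $r_j\ge p+2$. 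Comparing the $(k,k+1)$ blocks of $M_{X,Y}S=\bar SM_{X',Y'}$ yields $X_kS_{k+1,k+1}=\bar S_{k,k}X'_k$, i.e.\ $X_kC=CX'_k$ when $k$ is even and $X_k\bar C=\bar CX'_k$ when $k$ is odd, which are (i) and (ii); comparing the $(r_j,r_j)$ blocks (the contributions of the $X'_c$ blocks of $M_{X',Y'}$ vanish because $r_j>p+1$) yields $Y_jS_{r_j,r_j}=\bar S_{r_j,r_j}Y'_j$, i.e.\ $Y_jC=\bar CY'_j$ when $j$ is odd and $Y_j\bar C=CY'_j$ when $j$ is even, which are (iii) and (iv).

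For the ``if'' part, given a nonsingular $C$ satisfying (i)--(iv) I would take $S:=\diagg(C,\bar C,C,\bar C,\dots)$ with $N$ diagonal blocks ($C$ in the odd positions, $\bar C$ in the even ones). This $S$ is the special case of the form \eqref{gny} in which all off-diagonal parameters $C^{(k)}$ are $0$, so $\bar SJ=JS$ by Lemma~\ref{kse}(b) and $S$ is nonsingular by Lemma~\ref{kse1}(b). Checking $M_{X,Y}S=\bar SM_{X',Y'}$ block by block, the $(k,k+1)$ block reduces to $X_kS_{k+1,k+1}=\bar S_{k,k}X'_k$, which holds by (i)--(ii); the $(r_j,r_j)$ block reduces to $Y_jS_{r_j,r_j}=\bar S_{r_j,r_j}Y'_j$, which holds by (iii)--(iv); and every other block on both sides vanishes because $S$ is diagonal. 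This last verification is routine.

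The one delicate point — and the reason the two cases appear in the definition of $M_{X,Y}$ — is the parity bookkeeping in the block comparisons above: $Y_j$ must sit in a position $r_j$ with $r_j\equiv j\pmod 2$, so that $S_{r_j,r_j}$ equals $C$ exactly when $j$ is odd (condition (iii)) and $\bar C$ exactly when $j$ is even (condition (iv)). The $X$-part occupies positions $1,\dots,p+1$, so when $p$ is odd the $Y$-blocks can start at position $p+2$; but when $p$ is even position $p+1$ is odd, and one must insert the dummy summand $0$ before $Y_1$ in order to push $Y_1$ into position $p+3$. Recognizing this is the crux; once the single-Jordan-block structure of $J$ has been used (via Lemma~\ref{kse}) to pin down $S$, no real obstacle remains.
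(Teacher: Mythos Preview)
Your proposal is correct and follows essentially the same approach as the paper's proof: use Lemma~\ref{kse}(b) (with $t=1$, since $J=J_N(0_n)$ is a single block) to force the diagonal of $S$ to be $(C,\bar C,C,\bar C,\dots)$, then read off (i)--(iv) from $M_{X,Y}S=\bar SM_{X',Y'}$ at the relevant block positions, and for the converse take $S=\diagg(C,\bar C,C,\bar C,\dots)$. Your write-up is in fact more detailed than the paper's, which simply asserts the resulting equalities; your explicit check that the upper-triangular off-diagonal part of $S$ does not interfere, and your explanation of why the dummy $0$ block is needed when $p$ is even, are both accurate and helpful.
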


\begin{proof} $\Longrightarrow$.
Suppose that there is an $S$ such that
$\bar
S^{-1}(J,M_{X,Y})S=(J,M_{X',Y'})$. By
Lemma \ref{kse}(a), all matrices $S$
satisfying $\bar SJ=JS$ have the form
\begin{equation*}\label{gei}
 S=\begin{bmatrix}
     C & C_1&C_2&C_3&\ddots \\
      & \bar C & \bar C_1&\bar C_2&\ddots \\
      &&C & C_1&\ddots \\
      &&&\bar C & \ddots \\
0&&& & \ddots \\
   \end{bmatrix},
\end{equation*}
and so $\bar S^{-1}M_{X,Y}S=M_{X',Y'}$
implies
\begin{align*}
\bar C^{-1}X_1\bar C&=X'_1,& C^{-1}X_2 C&=X'_2
,& \bar C^{-1}X_3&\bar C=X'_3,\ \dots\\
\bar C^{-1}Y_1C&=Y'_1,& C^{-1}Y_2 \bar C&=Y'_2
,& \bar C^{-1}Y_3& C=Y'_3,\ \dots
\end{align*}
which ensures the validity (i)--(iv).

$\Longleftarrow$. Let (i)--(iv) hold
for some matrix $C$. Then $(J,M_{X,Y})$
and $(J,M_{X',Y'})$ are consimilar via
$S:=C\oplus \bar C\oplus C\oplus \bar
C\oplus \cdots$.
\end{proof}

\section{Proof of Theorem \ref{gey2}}
\label{s4}

In this section, we prove that for each
biquiver $Q$,
\begin{equation}\label{4.8}
\parbox{25em}
{the problem of classifying pairs of
semilinear operators contains the
problem of classifying representations
of $Q$.
}
\end{equation}

To make the proof clear, \emph{we first
establish that \eqref{4.8} holds for
all representations of the biquiver
\eqref{ksy}.}  Its arbitrary
representation $\cal R$ has the form
\eqref{2.6aa}; let the mappings $\cal
A,B,\dots,G$ be given by matrices
$A,B,\dots,G$ in some bases of the
spaces $U,V,W$. Changing the bases, we
can reduce these matrices by
transformations
\begin{equation}\label{aaa}
\begin{split}
\xymatrix@=3pc{
 &{1}&\\
 {2}\ar@(ul,dl)@{-->}_{\bar S_2^{-1}CS_2}
 \ar@{-->}[ur]^{\bar S_1^{-1}AS_2}
  \ar@/^/@{->}[rr]^{S_3^{-1}DS_2}
 \ar@/_/@{-->}[rr]_{\bar S_3^{-1}ES_2} &&{3}
 \ar[ul]_{S_1^{-1}BS_3}
 \ar@(ur,dr)^{S_3^{-1}FS_3}
 }\end{split}
\end{equation}
in which $S_1,S_2,S_3$ are the change
of basis matrices.

Define the matrices
\begin{equation*}\label{jere}
J:=J_{2}(0_{q_1})\oplus J_{7}(0_{q_2})\oplus
J_{4}(0_{q_3}),
\end{equation*}
\begin{equation*}\label{w38}
M:=\left[
\begin{array}{cc|ccccccc|cccc}
0&0& A&0&0&0&0&0&0& 0&0&0&0\\
0&0& 0&0&0&0&0&0&0& B&0&0&0\\
  \hline
0&0& 0&0&C&0&0&0&0& 0&0&0&0\\
0&0& 0&0&0&0&0&0&0& 0&0&0&0\\
0&0& 0&0&0&0&0&0&0& 0&0&0&0\\
0&0& 0&0&0&0&0&0&0& 0&0&0&0\\
0&0& 0&0&0&0&0&0&0& 0&0&0&0\\
0&0& 0&0&0&0&0&0&0& 0&0&0&0\\
0&0& 0&0&0&0&0&0&0& 0&0&0&0\\
\hline
0&0& 0&0&0&0&E&0&0& 0&0&0&0\\
0&0& 0&0&0&0&0&0&D& 0&0&0&0\\
0&0& 0&0&0&0&0&0&0& 0&0&0&0\\
0&0& 0&0&0&0&0&0&0& 0&0&F&0\\
\end{array}\right],
\end{equation*}
and denote by $M'$ the matrix obtained
from $M$ by replacing $A,B,C,D,E,F$
with $A',B',C',D',E',F'$.

The statement \eqref{4.8} is valid for
representations of the biquiver
\eqref{ksy} due to the following lemma.

\begin{lemma}\label{kt5r}
Let $J$, $M,$ and $M'$ be the matrices
defined above. The following statements
are equivalent:
\begin{itemize}
  \item[\rm(i)] The matrix pairs
      $(J,M)$ and $(J,M')$ are
      consimilar.

  \item[\rm(ii)] There exist
      nonsingular matrices
      $S_1,S_2,S_3$ such that
\begin{equation}\label{heo}
\begin{aligned} AS_2&=\bar
S_1A',& \quad BS_3&= S_1B',&\quad
CS_2&=\bar S_2C',\\
DS_2&=S_3D',& ES_2&=\bar
S_3E',&
FS_3&=S_3F'.
\end{aligned}
\end{equation}

\item[\rm(iii)] The matrix tuples
    $(A,B,C,D,E,F)$ and
    $(A',B',C',D',E',F')$ give the
    same representation \eqref{ksy}
    of the biquiver \eqref{2.6aa}
    in different bases; see
    \eqref{aaa}.
\end{itemize}
\end{lemma}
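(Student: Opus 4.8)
The plan is to prove the chain of implications (iii) $\Rightarrow$ (ii) $\Rightarrow$ (i) $\Rightarrow$ (ii) $\Rightarrow$ (iii), where the two middle arrows are really one equivalence proved once. The equivalence of (ii) and (iii) is essentially the definition: comparing \eqref{heo} with the transformation rules in \eqref{aaa}, the six equations $AS_2=\bar S_1A'$, $BS_3=S_1B'$, $CS_2=\bar S_2C'$, $DS_2=S_3D'$, $ES_2=\bar S_3E'$, $FS_3=S_3F'$ say precisely that $A'=\bar S_1^{-1}AS_2$, $B'=S_1^{-1}BS_3$, $C'=\bar S_2^{-1}CS_2$, $D'=S_3^{-1}DS_2$, $E'=\bar S_3^{-1}ES_2$, $F'=S_3^{-1}FS_3$, i.e. that the two tuples are related by a change of basis in $U,V,W$. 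So (ii) $\Leftrightarrow$ (iii) is immediate once one reads off which $S_i$ plays which role, and the only care needed is the placement of bars: a semilinear map's matrix transforms with $\bar S$ on the target side (as recorded by the formula ${\cal A}_{f'e'}=\bar S_{f\to f'}^{-1}{\cal A}_{fe}S_{e\to e'}$ from Section~\ref{s1}), which is exactly why $A,C,E$ (the dashed arrows) carry a bar on $S_1,S_2,S_3$ respectively and $B,D,F$ (the full arrows) do not.

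The real content is the equivalence of (i) and (ii). For the direction (ii) $\Rightarrow$ (i), I would simply exhibit the conjugating matrix: set $S:=\diagg$ of the appropriate string of blocks $S_1,S_2,S_3$ (and their conjugates, repeated along the Jordan strings exactly as the commuting-matrix form of Lemma~\ref{kse}(b) dictates), check that this $S$ satisfies $\bar SJ=JS$ by Lemma~\ref{kse}(b), and then verify $\bar S^{-1}MS=M'$ block by block; each nonzero block of $M$ sits at a position forcing one of the six equations in \eqref{heo}, and conversely those six equations guarantee $\bar S^{-1}MS=M'$. This is the pattern already used in the proofs of Lemma~\ref{lir} and the lemma in Section~\ref{s3}, just with three independent ``slots'' $S_1,S_2,S_3$ carved out of the three Jordan blocks $J_2(0_{q_1})$, $J_7(0_{q_2})$, $J_4(0_{q_3})$.

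The main obstacle, and the step I expect to occupy most of the work, is the forward direction (i) $\Rightarrow$ (ii): given an \emph{arbitrary} nonsingular $S$ with $\bar S^{-1}(J,M)S=(J,M')$, one must show the six equations hold for \emph{some} nonsingular $S_1,S_2,S_3$ — the $S$ that works need not itself be block-diagonal. The approach is the standard one: since $\bar SJ=JS$, Lemma~\ref{kse}(b) pins down the shape of $S$ (a block matrix of $C_{ij}^{(k)}$'s indexed by the three Jordan blocks, with the explicit upper-triangular-in-the-Weyr-sense form and the conjugation pattern). One then writes out $MS=\bar SM'$ and reads off, entry-block by entry-block, the equations governing the ``leading'' subblocks. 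The positions of $A,B,C,D,E,F$ in $M$ must be — and, one checks, are — chosen precisely so that the resulting relations are $A\,(\text{leading block of }S_2)=\overline{(\text{leading block of }S_1)}\,A'$ and its five analogues, where the leading blocks $S_1,S_2,S_3$ are exactly the $C_{ii}$-type diagonal subblocks of the three Jordan pieces. Finally, Lemma~\ref{kse1}(b) (or Lemma~\ref{kse}(b) together with block-triangularity) guarantees that nonsingularity of $S$ forces nonsingularity of each of these diagonal subblocks, so $S_1,S_2,S_3$ are invertible — completing \eqref{heo}. The delicate bookkeeping is verifying that the $7$-block (the nontrivial choice in $J_7(0_{q_2})$, chosen odd so that the slots for $A,C,E$ land on subblocks carrying the ``unbarred'' $C_{ij}$ rather than $\bar C_{ij}$, matching the bars in \eqref{heo}) and the interleaving of the three strings produce no spurious extra constraints linking $A,B,C,D,E,F$ to one another; this amounts to checking that the off-diagonal $C_{ij}^{(k)}$ blocks can be chosen freely (or zero) without obstructing the six equations, which is exactly the flexibility built into the form \eqref{gny}.
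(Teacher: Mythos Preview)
Your proposal is correct and follows essentially the same route as the paper: (ii)$\Leftrightarrow$(iii) by reading off \eqref{aaa}, (ii)$\Rightarrow$(i) by taking $S$ block-diagonal $(S_1\oplus\bar S_1)\oplus(S_2\oplus\bar S_2\oplus\cdots)\oplus(S_3\oplus\bar S_3\oplus\cdots)$, and (i)$\Rightarrow$(ii) by applying Lemma~\ref{kse}(b) to $\bar SJ=JS$, invoking Lemma~\ref{kse1}(b) for the nonsingularity of the diagonal subblocks $S_1,S_2,S_3$, and then reading off \eqref{heo} from $MS=\bar SM'$ at the six positions of $A,\dots,F$. Your closing worry about ``spurious extra constraints'' is slightly misdirected: in (i)$\Rightarrow$(ii) the matrix $S$ is \emph{given}, so any extra relations (which indeed appear, but only couple off-diagonal subblocks of $S$ with $A',\dots,F'$) are automatically satisfied and do not interfere with extracting \eqref{heo}; in (ii)$\Rightarrow$(i) you set those off-diagonal subblocks to zero and the issue evaporates---this is exactly why the paper notes that each horizontal and vertical substrip of $M$ carries at most one nonzero subblock.
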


\begin{proof}  (i)\,$\Longrightarrow$\,(ii).
Let $(J,M)$ and $(J,M')$ be consimilar;
that is, there exists a nonsingular
matrix $S$ such that
\begin{equation}\label{ggt}
JS=\bar SJ,\qquad MS=\bar SM'.
\end{equation}
Applying Lemma \ref{kse}(b) to the
first equality in \eqref{ggt}, we
partition $S$ into blocks and subblocks
conformally to the partition of $J$ and
find that the diagonal subblocks of $S$
form a sequence of the form
\[
(S_1,\bar S_1\,|\,S_2,\bar S_2,S_2,\bar S_2,S_2,
\bar S_2,S_2\,|\,S_3,\bar S_3,S_3,\bar S_3).
\]
Lemma \ref{kse1}(b) ensures that the
subblocks $S_1,S_2,S_3$ are
nonsingular. Each of the horizontal and
vertical substrips of $M$ and $M'$ has
at most one nonzero subblock; we obtain
the equalities \eqref{heo} from the
second equality in \eqref{ggt} by
equating the corresponding subblocks on
the positions of subblocks
$A,B,C,D,E,F$.

(i)\,$\Longleftarrow$\,(ii). Suppose
that there are nonsingular matrices
$S_1,S_2,S_3$ that satisfy the
equations \eqref{heo}. Then the
equations \eqref{ggt} are satisfied if
we choose
\[
S:=(S_1\oplus\bar S_1)\oplus (S_2\oplus\bar S_2
\oplus S_2\oplus\bar S_2\oplus S_2\oplus
\bar S_2\oplus S_2)\oplus (S_3\oplus\bar S_3
\oplus S_3\oplus\bar S_3).
\]
It follows that $(J,M)$ and $(J,M')$
are consimilar.

(ii)\,$\Longleftrightarrow$\,(iii).
This equivalence follows from
\eqref{aaa}.
\end{proof}
\bigskip

\begin{proof}[Proof of Theorem \ref{gey2}]
Let us prove \eqref{4.8} for an
arbitrary biquiver $Q$ with vertices
$1,\dots,t$. Let $\cal R$ be a
representation of $Q$. Denote by ${\cal
R}_i$ the vector space that is assigned
to a vertex $i$ and by ${\cal
R}_{\alpha }$ the linear or semilinear
mapping that is assigned to an arrow
$\alpha $. Choose bases in the spaces
${\cal R}_1,\dots,{\cal R}_t$ and
denote by $R_{\alpha }$ the matrix of
${\cal R}_{\alpha }$ in these bases.
Changing the bases, we can reduce all
$R_{\alpha }$ by transformations
\begin{equation}\label{kyw}
R_{\alpha }\mapsto
  \begin{cases}
    S_j^{-1}R_{\alpha }S_i & \text{if $\alpha :i\to j$}, \\
    \bar S_j^{-1}R_{\alpha }S_i & \text{if $\alpha :i\dashrightarrow j$},
  \end{cases}
\end{equation}
in which $S_1,\dots,S_t$ are the change
of basis matrices.

By analogy with the proof of
\eqref{4.8} for the biquiver
\eqref{ksy}, we construct a matrix pair
$(J,M)$ as follows:
\begin{itemize}
  \item The matrix $J$ is any
      matrix of the form
\begin{equation*}\label{te9}
J=J_{p_1}(0_{q_1})\oplus\dots\oplus
J_{p_t}(0_{q_t}),\qquad
p_i\ne p_j\text{ if }i\ne j,\quad
q_i:=\dim {\cal R}_i,
\end{equation*}
in which all $p_i$ are large enough
(it suffices to take $p_i\ge 2n(i)$
in which $n(i)$ is the number of
arrows leaving or entering the
vertex $i$ with loops being counted
twice). The matrix $J$ is divided
into $t$ horizontal and $t$
vertical strips of sizes
$p_1q_1,\dots,p_tq_t$; the $i$th
strip is divided into $p_i$
substrips of size $q_i$.

  \item The matrix $M$ is any
      matrix that satisfies the
      following conditions:
\begin{itemize}
  \item $M$ and $J$ have the
      same size and the same
      partition into horizontal
      and vertical strips and
      substrips,
  \item every substrip of $M$
      has at most one nonzero
      subblock,
  \item the nonzero subblocks
      of $M$ are all the
      nonzero matrices $
      R_{\alpha }$,

  \item if $\alpha$ is an arrow
      from a vertex $i$ to a
      vertex $j$ and $R_{\alpha
      }$ is at the intersection
      of substrip $k$ of
      horizontal strip $i$ with
      substrip $l$ of vertical
      strip $j$, then $k$ is
      even if $\alpha :i\to j$
      and odd if $\alpha
      :i\dashrightarrow j$; $l$
      is odd.
\end{itemize}
\end{itemize}

Reasoning as in the case of the
biquiver \eqref{ksy}, one can prove
that if $(J,M)$ is reduced by
consimilarity transformations that
preserve $J$:
\[
(J,M)\mapsto \bar S^{-1}(J,M)S,
\quad \bar S^{-1}JS=J,\qquad S
\text{ is nonsingular},
\]
then the blocks $R_{\alpha }$ of $M$
are transformed as in \eqref{kyw}.
\end{proof}

\end{document}